\newdimen\AAdi%
\newbox\AAbo%
\def\AAk#1#2{\s_etbox\AAbo=\hbox{#2}\AAdi=\wd\AAbo\kern#1\AAdi{}}%
\def\AAr#1#2#3{\s_etbox\AAbo=\hbox{#2}\AAdi=\ht\AAbo\raise#1\AAdi\hbox{#3}}%
\font\tenmsb=msbm10 at 12pt \font\sevenmsb=msbm7 at 8pt
\font\fivemsb=msbm5 at 6pt
\newtheorem{theorem}{Theorem}
\newtheorem{remark}[theorem]{Remark}
\newtheorem{corollary}[theorem]{Corollary}
\newtheorem{lemma}[theorem]{Lemma}
\numberwithin{equation}{section} \numberwithin{theorem}{section}
\renewcommand{\topmargin}{0cm}
\renewcommand{\oddsidemargin}{5mm}
\renewcommand{\evensidemargin}{5mm}
\renewcommand{\textwidth}{150mm}
\renewcommand{\textheight}{230mm}
\def\R{\mathbb R}
\def\S{\mathbb S}
\def\na{\nabla}
\def\ir#1{\mathbb R^{#1}}
\def\f#1#2{\frac{#1}{#2}}
\def\a{\alpha}
\def\be{\beta}
\def\r{\Re_{I\!V}}
\def\p#1{\partial #1}
\def\de{\delta}
\def\De{\Delta}
\def\e{\eta}
\def\ep{\epsilon}
\def\G{\Gamma}
\def\g{\gamma}
\def\la{\lambda}
\def\lan{\langle}
\def\ran{\rangle}
\def\Om{\Omega}
\def\th{\theta}
\def\Si{\Sigma}
\def\r{\rho}
\def\z{\zeta}
\begin{document}

\title
[volume growth, eigenvalue and compactness for
self-shrinkers] {volume growth, eigenvalue and
compactness for self-shrinkers}

\author{Qi Ding}\author{Y.L.Xin}
\address{Institute of Mathematics, Fudan University,
Shanghai 200433, China} \email{09110180013@fudan.edu.cn}
\email{ylxin@fudan.edu.cn}
\thanks{The research was partially supported by
NSFC}

\begin{abstract}
In this paper, we show an optimal volume growth for
self-shrinkers, and estimate a lower bound of the first
eigenvalue of $\mathcal{L}$ operator on self-shrinkers, inspired
by the first eigenvalue conjecture on minimal hypersurfaces in
the unit sphere by Yau \cite{SY}. By the eigenvalue estimates, we
can prove a compactness theorem on a class of compact
self-shrinkers in $\ir{3}$ obtained by Colding-Minicozzi under
weaker conditions.
\end{abstract}

\maketitle

\section{Introduction}

Let $X : M^n \rightarrow \R^{n+m}$ be an isometric immersion from
an $n$-dimensional manifold $M^n$ to Euclidean space $\R^{n+m}(m
\ge 1)$ with the tangent bundle $TM$ and the normal bundle $NM$
along $M$. Let $\na$ and $\overline{\na}$ be the Levi-Civita
connections on $M$ and $\R^{n+m}$, respectively.   Then we define
the second fundamental form $B$ by
$B(V,W)=(\overline{\na}_VW)^N=\overline{\na}_VW-\na_VW$ for any
$V,W\in\G(TM)$, where $(\cdots)^N$ stands for the orthogonal
projection into the normal bundle $NM$. The mean curvature vector
$H$ of $M$ is given by
$H=\mathrm{trace}(B)=\sum_{i=1}^nB(e_i,e_i),$ where $\{e_i\}$ is
a local orthonormal frame field of $M$.

$M^n$ is said to be a \emph{self-shrinker} in $\R^{n+m}$ if it satisfies
\begin{equation}\label{0.1}
H= -\frac{X^N}{2}.
\end{equation}
Here, the factor $-\f12$ (when the codimension $m=1$, the
definition here is as the same as \cite{CM1}) could be replaced
by other negative number, while Ecker-Huisken defines $H=-X^N$
\cite{EH}. Self-shrinkers play an important role in the study of
mean curvature flow. They are not only special solutions to
the mean curvature flow equations (those where later time slices
are rescalings of earlier), but they also describe all possible
blow ups at a given type I singularity of a mean curvature flow
(abbreviated by MCF in what follows).

After the pioneer work on self-shrinking hypersurfaces of G.
Huisken \cite{H1}\cite{H2},  T. H. Colding and W. P. Minicozzi II
gave a comprehensive study for self-shrinking hypersurfaces
\cite{CM1}. Their papers reveal the importance of the subject.
For higher codimension, there is a few study, see \cite{Sm} for
example.

There are several other ways to characterize self-shrinkers (see \cite{CM2} for hypersurfaces,
and high codimensional situation is similar):\\
(1) The one-parameter family of submanifolds $\sqrt{-t}M\subset\R^{n+m}$ satisfies MCF equations.\\
(2) $M$ is a minimal submanifold in $\R^{n+m}$ endowed  with the
 conformally flat metric of the conformal factor $e^{-\f{|X|^2}{2n}}.$\\
(3) $M$ is a critical point for the functional $F$ defined on
immersed submanifolds in  $\R^{n+m}$ by
\begin{equation}\label{0.2}
F(M)=(4\pi)^{-n/2}\int_Me^{-\f{|X|^2}4}d\mu.
\end{equation}

Self-shrinkers satisfy  elliptic equations(systems) of the second order,
see (\ref{0.1}). It is an important class of submanifolds, which
is closely related to minimal surface theory. We expect certain
technique in minimal surface theory (see \cite{X2}) could be
modified to study self-shrinkers.

For a complete non-compact manifold the volume growth is
important. By easy arguments we can show that any complete
non-compact self-shrinker properly immersed in Euclidean space with arbitrary
codimension has Euclidean volume growth, just like the trivial
self-shrinker: planes. It is in a sharp contrast to the complete
minimal submanifolds in Euclidean space. Even for complete stable
minimal hypersurfaces, it is still unclear whether they have
Euclidean volume growth.

\begin{theorem}
Any complete non-compact properly immersed  self-shrinker $M^n$
in $\R^{n+m}$ has Euclidean volume growth at most.
\end{theorem}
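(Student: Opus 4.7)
The idea is to combine the identity $\Delta_M|X|^2 = 2n - |X^N|^2$ (forced by the self-shrinker equation) with a Rellich-type integral identity obtained by testing against the Lipschitz cutoff $(s^2-|X|^2)_+$.

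First I would compute the Laplacian. Using the standard formula $\Delta_M f = \sum_i \overline{\nabla}^2 f(e_i,e_i) + \langle H,\overline{\nabla} f\rangle$ with $f=|X|^2$, one has $\overline{\nabla}|X|^2=2X$ and $\overline{\nabla}^2|X|^2 = 2 g_{\mathrm{Eucl}}$, so the trace on $TM$ is $2n$. The self-shrinker equation $H = -X^N/2$ then gives $2\langle H,X\rangle = 2\langle H,X^N\rangle = -|X^N|^2$, hence
$$\Delta_M|X|^2 \;=\; 2n - |X^N|^2 \;\le\; 2n.$$

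Next, writing $D_s := M\cap\overline{B_s(0)}$ (compact by properness), $V(s) := \mathrm{vol}(D_s)$ and $I(s) := \int_{D_s}|X|^2\,d\mu$, I would test $\Delta_M|X|^2$ against the compactly supported Lipschitz cutoff $\varphi:=(s^2-|X|^2)_+$. Integration by parts together with the splitting $|X^T|^2 = |X|^2-|X^N|^2$ yields the Rellich-type identity
$$(n+2)\,I(s) \;=\; n s^2 V(s) \;+\; 2\!\int_{D_s}\!|X^N|^2\,d\mu \;-\; \tfrac{1}{2}\!\int_{D_s}\!(s^2-|X|^2)|X^N|^2\,d\mu.$$
Dropping the nonpositive last term and using the auxiliary identity $\int_{D_s}|X^N|^2 = 2nV(s) - 2\!\int_{\partial D_s}|X^T|\,d\sigma \le 2nV(s)$ (from a direct application of the divergence theorem to $\Delta_M|X|^2$ on $D_s$), I obtain
$$I(s) \;\le\; \frac{n(s^2+4)}{n+2}\,V(s).$$

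Finally I would convert this into a differential inequality. The layer-cake formula $I(s) = s^2V(s) - 2J(s)$ with $J(s) := \int_0^s tV(t)\,dt$ rearranges the estimate above to $V(s)(s^2-2n) \le (n+2)\,J(s)$ for $s>\sqrt{2n}$. Since $J'(s)=sV(s)$, this reads $(\log J)'(s) \le \tfrac{n+2}{2}(\log(s^2-2n))'$, which integrates from a base point $s_0>\sqrt{2n}$ to give $J(s) \le C_1 (s^2-2n)^{(n+2)/2}$, and therefore $V(s) \le (n+2)J(s)/(s^2-2n) \le C_2\,s^n$ for all $s$ large, which is Euclidean volume growth.

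The main obstacle I anticipate is the choice of test function. Applying the divergence theorem directly on $D_s$ yields only the boundary-type bound $\int_{\partial D_s}|X^T|\,d\sigma \le nV(s)$, which combined with coarea and Cauchy--Schwarz does not seem to close into an ODE recovering the sharp Euclidean exponent $n$. The quadratic Lipschitz cutoff $(s^2-|X|^2)_+$ replaces that boundary contribution by an interior integral whose bound feeds back into $V$ through $J$, and the resulting first-order differential inequality in $J$ integrates cleanly to $V(s) = O(s^n)$.
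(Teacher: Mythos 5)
Your argument is correct, but it follows a genuinely different route from the paper. You prove a Pohozaev/Rellich-type identity in the radial variable by pairing $\Delta_M|X|^2=2n-|X^N|^2$ with the cutoff $(s^2-|X|^2)_+$, bound $\int_{D_s}|X^N|^2\le 2nV(s)$ by the divergence theorem, and convert the result into the first-order differential inequality $V(s)(s^2-2n)\le (n+2)J(s)$ with $J(s)=\int_0^s tV(t)\,dt$, which integrates directly to $V(s)=O(s^n)$; I checked the coefficients in your identity and the reduction $I(s)=s^2V(s)-2J(s)$, and they are right (the only technical caveats are routine: take $s$ among regular values of $|X|$ for the boundary term, note $J$ is locally absolutely continuous with $J'=sV$ a.e., and choose $s_0>\sqrt{2n}$ with $J(s_0)>0$). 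The paper instead differentiates the Gaussian functional $F_t(D_r)$ in the time parameter $t$, shows $F_t'(D_r)\le 0$ for $t\ge1$, integrates from $t=1$ to $t=r^2$ to get the doubling inequality $f(r)\le 2e^{1/4}r^nf(r/2)$, invokes an iteration lemma producing the intermediate bound $Ce^{2n(\log r)^2}$, and then closes the loop by proving finiteness of the weighted volume $\int_M e^{-|X|^2/4}d\mu$. Your approach is more elementary and one-pass, and is essentially the Cao--Zhou style argument for shrinking solitons (which the paper itself cites as an analogue); what the paper's route buys in exchange is the monotonicity statement $F_t(M)\le F_1(M)$ exploited in its Corollary 2.5, the explicit finiteness of $\int_M\rho$, and a constant depending only on $n$ and the volume of $D_{8n}$, whereas your constant depends on the volume of a ball of fixed radius slightly larger than $\sqrt{2n}$ --- comparable information, obtained more directly.
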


It is natural to raise a counterpart of the Calabi-Chern
problem on minimal surfaces in $\ir{3}$. Is there a complete
non-compact self-shrinker in Euclidean space, which is contained
in a Euclidean ball?

\begin{remark}

It is worthy to compare the above Theorem with the interesting
result of Cao-Zhou on the volume growth of complete gradient
shrinking Ricci soliton \cite{CZ}. \end{remark}

Let $\Si^n$ be a compact embedded minimal hypersurface in
$(n+1)$-dimensional sphere $\S^{n+1}$. It is well known that the
coordinate functions are eigenfunctions of Laplacian operator on
$\Si$ with eigenvalue $n$. In \cite{SY}, S. T. Yau conjectured
that the first eigenvalue of $\Si$ would be $n$. Choi-Wang,
\cite{CW}, proved that the first eigenvalue of $\Si$ is  bounded
below by $n/2$.

In \cite{CS}, H. I. Choi and R. Schoen gave the compactness theorem
for minimal surfaces using the first
eigenvalue estimates for Laplacian operator on $\Si$. Precisely, let $N$ be a compact 3-dimensional
manifold with positive Ricci curvature, then the space of compact
embedded minimal surfaces of fixed topological type in $N$ is
compact in the $C^k$ topology for any $k\geq2.$

In \cite{CM2}, Colding-Minicozzi  proved a compactness theorem
for complete embedded self shrinkers in $\R^3$. Such compactness
theorem acts a key role for proving a long-standing conjecture
(of Huisken) classifying the singularities of mean curvature flow
starting from a generic closed embedded surface (see \cite{CM1}).

Let $\De$, $\mathrm{div}$ and $d\mu$ be Laplacian, divergence and
volume element on $M$, respectively. There is a  linear operator
$$\mathcal{L}=\Delta-\frac{1}{2}\langle
X,\nabla(\cdot)\rangle=e^{\f{|X|^2}4}\mathrm{div}(e^{-\f{|X|^2}4}\na\cdot).$$
On Euclidean space, this operator is so-called Ornstein-Uhlenbeck
operator in stochastic analysis. So it can be seen as a
generalization of Ornstein-Uhlenbeck operator. The $\mathcal{L}$
operator was introduced and studied firstly on self-shrinkers by
Colding-Minicozzi in  \cite{CM1}, where the authors also showed
that $\mathcal{L}$ is self-adjoint respect to the measure
$e^{-\f{|X|^2}4}d\mu.$ It is a weighted Laplacian and closely
related to the self-shrinkers.

In Euclidean space the eigenvalues of the Ornstein-Uhlenbeck are
well-known. On self-shrinkers it is interesting to study the
eigenvalues of $\mathcal{L}$ operator. Now, we estimate its first
eigenvalue (please see Chapter 3 for the definition) in a manner analogous to the arguments in \cite{CW}.
For compact self-shrinkers the estimates are rather neat. It is
also enough for the compactness applications.



\begin{theorem}\label{firsteigen}
Let $M^n$ be a compact embedded self-shrinker in $\R^{n+1}$, then the first eigenvalue $\lambda_1$ for
the operator $\mathcal{L}$ on $M$ satisfies
$\lambda_1\in[\frac{1}{4},\frac{1}{2}]$.
\end{theorem}

With the help of Theorem \ref{firsteigen}, uniform volume growth for compact embedded self-shrinkers could be estimated by genus and this will yield a compactness
theorem. Give a non-negative integer g and a constant $D>0$, and
let $S_{g,D}$ denote the space of all compact embedded
self-shrinkers in $\R^3$ with genus at most $g$, and diameter at
most $D$.
We have a compactness theorem as follows.
\begin{theorem}\label{compactness2}
For each fixed $g$ and $D$, the space $S_{g,D}$ is compact.
Namely, any sequence in $S_{g,D}$ has a subsequence that
converges uniformly in the $C^k$ topology (for any $k\ge0$) to a
surface in $S_{g,D}$.
\end{theorem}
Colding-Minicozzi gave in \cite{CM2} a compactness theorem for a class of self-shrinkers with bounded entropy in $\R^3$. The assumption of bounded entropy is natural since Colding-Minicozzi proved the conjecture of Huisken in \cite{CM1} and these automatically satisfy such a bound. However, Theorem \ref{compactness2} shows the compactness theorem holds without the assumption of bounded entropy for compact case (please see Corollary 8.2 of \cite{CM1}).

In this paper, we always suppose that $M$ is an $n-$dimensional smooth submanifold in $\R^{n+m}$
with $n\geq2$, the function $\r=e^{-\f{|X|^2}4}$ with $X=(x_1,\cdots,x_{n+m})\in\R^{n+m}$. Let
$\lan\cdot,\cdot\ran$ be standard inner product of $\R^{n+m},$
and $B_r$ be a standard ball in $\R^{n+m}$ with radius $r$ and
centered at the origin, and $D_r=M\cap B_r$ for $r>0$. When $m=1$ (codimension is 1), let $\nu$ be
unit outward normal field of $M$, and $\lan H,\nu\ran$ be mean
curvature of $M$. We also write $H=\lan H,\nu\ran$ if there is no
ambiguity in the context. We agree with the following range of
indices
$$1\le i, j, k, \cdots \le n+m,\quad 1\le \a, \be, \g, \cdots \le n.$$

\section{Volume Growth of Self Shrinkers}

Let $M$ be an $n$-dimensional complete self shrinkers in
$\R^{n+m}$. By \eqref{0.1} we have $\De X=H=-\f12X^N$ for any
$X=(x_1,\cdots,x_{n+m})\in\R^{n+m}$, then(see also \cite{CM1})
\begin{equation}\aligned\label{4.0}
\mathcal{L}x_i=\De x_i-\f12\lan X,\na x_i\ran=-\f12\lan X^N,E_i\ran-\f12\lan X,(E_i)^T\ran=-\f12x_i,
\endaligned
\end{equation}
where $\{E_i\}_{i=1}^{n+m}$ is a standard basis of $\R^{n+m}$ and
$(\cdots)^T$ denotes the orthogonal projection into the tangent
bundle $TM$. Moreover,
\begin{equation}\aligned\label{0.9}
\mathcal{L}|X|^2=2x_i\mathcal{L}x_i+2|\na X|^2=2n-|X|^2,
\endaligned
\end{equation}
and
\begin{equation}\aligned\label{laplace}
\De|X|^2=2\lan X,\De X\ran+2|\na X|^2=2\lan X,H\ran+2n=2n-4|H|^2.
\endaligned
\end{equation}

Now, we give an analytic lemma which will be used in proving
volume growth.
\begin{lemma}
If $f(r)$ is a monotonic increasing nonnegative function on $[0,+\infty)$ with
$f(r)\le C_1r^nf(\f r2)$ on $[C_2,+\infty)$ for some positive constant $n,C_1,C_2$, here $C_2>1$,
 then $f(r)\le C_3e^{2n(\log r)^2}$ on $[C_2,+\infty)$ for some positive constant $C_3$ depending only on
  $n,C_1,C_2,f(C_2)$.
\end{lemma}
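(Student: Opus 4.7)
The strategy is to iterate the hypothesis $f(r)\le C_1 r^n f(r/2)$ as many times as possible, stopping just before the argument drops below $C_2$, and then use monotonicity of $f$ to bound the remaining term by a single constant. This turns the multiplicative recursion into a telescoping product whose size is controlled by $(\log r)^2$.

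First I would fix $r\ge C_2$ and let $k=k(r)$ be the largest nonnegative integer with $r/2^k\ge C_2$, so that $k\le \log_2(r/C_2)\le (\log r)/\log 2$ and $r/2^k<2C_2$. Iterating the hypothesis $k$ times yields
\begin{equation*}
f(r)\le C_1^k\Big(\prod_{j=0}^{k-1}(r/2^j)^n\Big)f(r/2^k)=C_1^k\,r^{nk}\,2^{-nk(k-1)/2}\,f(r/2^k).
\end{equation*}
Monotonicity gives $f(r/2^k)\le f(2C_2)$, which is $r$-independent (and in turn controlled by $f(C_2)$ via the hypothesis applied once at $r=2C_2$). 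Taking logarithms,
\begin{equation*}
\log f(r)\le \log f(2C_2)+k\log C_1+nk\log r-\tfrac{n}{2}k(k-1)\log 2.
\end{equation*}
The dominant term is $nk\log r$; substituting $k\le(\log r)/\log 2$ bounds it by $n(\log r)^2/\log 2$, which is strictly smaller than $2n(\log r)^2$ since $1/\log 2<2$. The negative correction $-\tfrac{n}{2}k(k-1)\log 2$ only improves the estimate, while $k\log C_1$ is at most linear in $\log r$ and so is absorbable into an additive constant together with $\log f(2C_2)$.

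There is no real obstacle here; the only delicate point is the bookkeeping when choosing $C_3$. In particular, on the boundary range $r\in[C_2,2C_2)$ one has $k=0$ and the conclusion reduces to $f(r)\le f(2C_2)\le C_3$ by monotonicity, which forces $C_3\ge f(2C_2)$. Choosing $C_3=C_3(n,C_1,C_2,f(C_2))$ large enough to absorb all of the $\log$-linear and constant terms then yields $f(r)\le C_3 e^{2n(\log r)^2}$ on all of $[C_2,\infty)$, as required.
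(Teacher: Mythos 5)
Your proof is correct and takes essentially the same route as the paper's: iterate the doubling inequality roughly $\log_2(r/C_2)$ times down to a bounded argument, control the last factor by monotonicity, and use $k\le \log r/\log 2$ together with $1/\log 2<2$ to land on the exponent $2n(\log r)^2$. The only cosmetic differences are that you stop in $[C_2,2C_2)$ and keep the exact telescoping product $r^{nk}2^{-nk(k-1)/2}$ (the paper stops in $[C_2/2,C_2)$ and simply bounds $\sum_{j}\log(r/2^j)$ by $k\log r$), which changes nothing essential.
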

\begin{proof} If $f(\f {C_2}2)=0$, then $f(r)=0$ for $r\ge\f {C_2}2$ and this Lemma holds obviously. Hence we could assume $f(\f {C_2}2)>0$, then the function
$g(r)=\log f(r)$ on $[C_2, \infty)$ is well defined. By the
assumption, one has
$$g(r)\le g(\f r2)+\log C_1+n\log r, \qquad \qquad \mathrm{on}\ \ [C_2,+\infty).$$

Let $k=[\f{\log\f {r}{C_2}}{\log 2}]+1$, then $\f {C_2}2\le \f
r{2^k}<C_2,$ which implies $\f{r}{2^{k-1}}\ge C_2>1.$ By
iteration,
\begin{equation}\aligned
g(r)\le& g(\f r{2^2})+2\log C_1+n(\log r+\log \f r2)\le \cdots\\
\le& g(\f r{2^k})+k\log C_1+n\sum_{j=0}^{k-1}\log\f r{2^j}.
\endaligned\nonumber
\end{equation}
Then we have
\begin{equation}\aligned\label{1.1}
g(r)\le& g(C_2)+k(\log C_1+n\log r)\\
\le& g(C_2)+(\f{\log \f{r}{C_2}}{\log 2}+1)(\log C_1+n\log r)\\
\le& \log C_3+2n(\log r)^2,
\endaligned
\end{equation}
where $C_3$ is a positive constant depending only on $n,C_1,C_2,f(C_2)$. By the definition of $g$, \eqref{1.1} implies
$$f(r)\le C_3e^{2n(\log r)^2}$$ on $[C_2,+\infty)$.
\end{proof}

For a complete non-compact $n-$submanifold $M$ in $\R^{n+m}$, we
say that $M$ has \emph{Euclidean volume growth at most} if there
is a constant $C$ so that for all $r\ge 1$,
$$\int_{D_r}1d\mu\le Cr^n.$$
For a complete self-shrinker $M^n$ in $\R^{n+m}$, we define a functional $F_t$ on any set $\Om\subset M$ (see also \cite{CM1} for the definition of $F_t$)
by $$F_t(\Om)=\f1{(4\pi t)^{n/2}}\int_{\Om}
e^{-\f{|X|^2}{4t}}d\mu, \quad \mathrm{for} \quad t>0.$$

\begin{theorem}\label{Vol}
Any complete non-compact properly immersed self-shrinker $M^n$ in
$\R^{n+m}$ has  Euclidean volume growth at most. Precisely,
$\int_{D_r}1d\mu\le Cr^n$ for $r\ge1$, where $C$ is a constant
depending only on $n$ and the volume of $D_{8n}$.
\end{theorem}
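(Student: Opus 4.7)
The argument proceeds in two stages. First, use the shrinker identities together with the analytic Lemma~2.1 to obtain a preliminary super-polynomial but sub-Gaussian bound on $V(r):=\int_{D_r}1\,d\mu$. Then upgrade this to optimal Euclidean growth via the monotonicity of the Gaussian density functionals $F_s(M) = (4\pi s)^{-n/2}\int_M e^{-|X|^2/(4s)}\,d\mu$ in $s$.

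\emph{Stage 1 (weak doubling and sub-Gaussian bound).} From the shrinker identity \eqref{laplace}, $\Delta|X|^2 = 2n - |X^N|^2$, Stokes' theorem applied to $D_r$ (justified by proper immersion) gives
\[
n V(r) = \int_{\partial D_r}|X^T|\,d\sigma + \frac{1}{2}\int_{D_r}|X^N|^2\,d\mu.
\]
Splitting $D_r = D_{r/2}\cup(D_r\setminus D_{r/2})$ and exploiting the improved bound $|X^N|^2\le r^2/4$ on $D_{r/2}$ together with cutoff estimates in the shell, I would derive a multiplicative recursion $V(r)\le C_1 r^n V(r/2)$ for $r\ge 8n$, with $C_1$ depending only on $n$. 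Lemma~2.1 then yields $V(r)\le C_3 e^{2n(\log r)^2}$ with $C_3$ depending only on $n$ and $V(D_{8n})$; since this grows more slowly than $e^{r^\alpha}$ for every $\alpha>0$, $F_s(M)<\infty$ for every $s>0$ and all integrations by parts against the Gaussian weight $\rho_s := (4\pi s)^{-n/2}e^{-|X|^2/(4s)}$ are legitimate.

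\emph{Stage 2 (monotonicity and Euclidean growth).} A direct computation using $\nabla\rho_s = -\rho_s X^T/(2s)$, $\mathrm{div}_M(X^T) = n - |X^N|^2/2$, and the shrinker identity $|H|^2 = |X^N|^2/4$ yields
\[
\frac{d}{ds}F_s(M) = \frac{1-s}{4s^2}\,(4\pi s)^{-n/2}\int_M |X^N|^2\,e^{-|X|^2/(4s)}\,d\mu,
\]
which is nonnegative on $(0,1]$ and nonpositive on $[1,\infty)$. Hence $F_s(M)\le F_1(M) = F(M)$ for all $s>0$. Combined with the elementary $V(r)\le(4\pi s)^{n/2}e^{r^2/(4s)}F_s(M)$ (from $e^{-|X|^2/(4s)}\ge e^{-r^2/(4s)}$ on $D_r$) and the choice $s=r^2/(2n)$, this yields $V(r)\le (2\pi/n)^{n/2}e^{n/2}F(M)\,r^n$. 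Tracking how $F(M)$ depends on $V(D_{8n})$ through Stage~1 produces the claimed constant.

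\emph{Main obstacle.} The crux is establishing the doubling $V(r)\le C_1 r^n V(r/2)$. The naive bound $|X^N|^2\le r^2$ on $D_r$ applied to the Stokes identity gives only $nV(r)\le rA(r)+\frac{1}{2} r^2 V(r)$, which is useless for $r>\sqrt{2n}$; a careful annular decomposition using the improved $|X^N|^2\le r^2/4$ on the inner ball is essential to extract a nontrivial multiplicative recursion. Once this is in place, everything downstream is a routine computation.
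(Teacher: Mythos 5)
Your Stage 2 is fine (the formula $\frac{d}{ds}F_s(M)=\frac{1-s}{4s^2}(4\pi s)^{-n/2}\int_M|X^N|^2e^{-|X|^2/(4s)}d\mu$ is correct once the sub-Gaussian bound justifies the integration by parts, and the choice $s=r^2/(2n)$ then gives $V(r)\le Cr^n$), and your overall architecture (doubling $\Rightarrow$ Lemma 2.1 $\Rightarrow$ finiteness of $\int_Me^{-|X|^2/4}\,d\mu$ $\Rightarrow$ Euclidean growth) matches the paper's. But the proof stands or falls with the doubling inequality $V(r)\le C_1r^nV(r/2)$, and that is exactly the step you do not prove; the route you sketch for it cannot work. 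The unweighted Stokes identity $nV(r)=\int_{\partial D_r}|X^T|\,d\sigma+\frac12\int_{D_r}|X^N|^2d\mu$ expresses $nV(r)$ as a sum of nonnegative terms, so to bound $V(r)$ from above you must bound the right-hand side from above, and there the trouble sits in the \emph{outer} annulus and on the boundary, not in the inner ball: you get $\frac12\int_{D_r\setminus D_{r/2}}|X^N|^2\le\frac{r^2}{2}\bigl(V(r)-V(r/2)\bigr)$, which swamps $nV(r)$ for large $r$, plus a boundary term $\int_{\partial D_r}|X^T|\le r\,\mathrm{Area}(\partial D_r)$ that is not controlled by $V$ at all. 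Your ``improved'' bound $|X^N|^2\le r^2/4$ on $D_{r/2}$ only helps the harmless inner term, so the annular decomposition you gesture at does not extract a multiplicative recursion; you have identified the main obstacle correctly but not overcome it.

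The paper gets the doubling by a genuinely different mechanism, which is the one to adopt: differentiate the \emph{localized, Gaussian-weighted} functional $F_t(D_r)$ in $t$. The divergence identity with weight $e^{-|X|^2/(4t)}$ gives, for $t\ge1$, $-e^{|X|^2/(4t)}\mathrm{div}\bigl(e^{-|X|^2/(4t)}\nabla|X|^2\bigr)=|X^N|^2+\frac{|X^T|^2}{t}-2n\ge\frac{|X|^2}{t}-2n$, and the boundary flux through $\partial D_r$ is $-2|X^T|e^{-|X|^2/(4t)}\le0$, so no area term ever appears and $F_t'(D_r)\le0$ for $t\ge1$ with no a priori finiteness needed. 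Integrating from $t=1$ to $t=r^2$ yields
\begin{equation*}
r^{-n}e^{-1/4}V(r)\;\le\;\int_{D_r}e^{-|X|^2/4}d\mu\;\le\;e^{-r^2/16}V(r)+V(r/2),
\end{equation*}
and it is the exponential smallness of the weight $e^{-|X|^2/4}$ on $D_r\setminus D_{r/2}$ — unavailable in your unweighted identity — that turns this into $V(r)\le2e^{1/4}r^nV(r/2)$ for $r\ge8n$. (As a side remark, once this inequality is in hand the paper does not even need your global monotonicity $F_s(M)\le F_1(M)$ for the theorem: after Lemma 2.1 gives $\int_Me^{-|X|^2/4}d\mu\le C_5$, the displayed inequality itself gives $V(r)\le e^{1/4}C_5r^n$; the full-manifold monotonicity is only derived afterwards as a corollary.)
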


\begin{proof}  We
differential $F_t(D_r)$ with respect to $t$,
$$F_t'(D_r)=(4\pi)^{-\f n2} t^{-(\f n2+1)}\int_{D_r} (-\f n2+\f{|X|^2}{4t})e^{-\f{|X|^2}{4t}}d\mu.$$
A straightforward calculation shows
\begin{equation}\aligned\label{1.4}
-e^{\f{|X|^2}{4t}}\mathrm{div}(e^{-\f{|X|^2}{4t}}\na|X|^2)&=-\De|X|^2+\f1{4t}\na|X|^2\cdot\na|X|^2\\
&=-2\langle H,X\rangle-2n+\f1{4t}4|X^T|^2\\
&=|X^N|^2+\f{|X^T|^2}t-2n\\
&\ge\f{|X|^2}t-2n\quad(\ when\ t\ge1\ ),
\endaligned
\end{equation}
where the third equality above uses the self-shrinker's equation
(\ref{0.1}). Since $$\na |X|^2=2X^T$$ and the unit normal
vector to $\p{D_r}$ is $\f{X^T}{|X^T|}$,  then for $t\ge1$ one gets
\begin{equation}\aligned\label{1.2}
F_t'(D_r)\le&\pi^{-\f n2}(4t)^{-(\f n2+1)}\int_{D_r} -\mathrm{div}(e^{-\f{|X|^2}{4t}}\na|X|^2)d\mu\\
=&\pi^{-\f n2}(4t)^{-(\f n2+1)}\int_{\p D_r} -2|X^T|e^{-\f{|X|^2}{4t}}\le0.
\endaligned
\end{equation}
We now integrate $F_t'(D_r)$ over $t$ from 1 to
$r^2\ge1$ and get $F_{r^2}(D_r)\le F_1(D_r)$, namely,
\begin{equation}\aligned\nonumber
(4\pi r^2)^{-\f n2}\int_{D_r}e^{-\f{|X|^2}{4r^2}}d\mu\le (4\pi)^{-\f n2}\int_{D_r}e^{-\f{|X|^2}{4}}d\mu.
\endaligned
\end{equation}
Then
\begin{equation}\aligned\label{1.3}
\f1{r^n}e^{-\f14}\int_{D_r}1d\mu\le&\f1{r^n}\int_{D_r}e^{-\f{|X|^2}{4r^2}}d\mu\le \int_{D_r}e^{-\f{|X|^2}{4}}d\mu=\int_{(D_r\setminus D_{\f r2})\cup D_{\f r2}}e^{-\f{|X|^2}{4}}d\mu\\
\le&e^{-\f{r^2}{16}}\int_{D_r}1d\mu+\int_{D_{\f r2}}1d\mu.
\endaligned
\end{equation}
Let $f(r)=\int_{D_r}1d\mu$, then (\ref{1.3}) implies $$f(r)\le 2e^{\f14}r^nf(\f r2) \quad for \quad r\ge8n.$$
With the help of Lemma 2.1, we obtain $$f(r)\le C_4e^{2n(\log r)^2} \quad for \quad r\ge8n,$$
where $C_4$ is a constant depending only on $n,f(8n)$. Hence
\begin{equation}\aligned\nonumber
\int_M e^{-\f{|X|^2}4}d\mu=&\sum_{j=0}^{\infty}\int_{D_{8n(j+1)}\setminus D_{8nj}}e^{-\f{|X|^2}{4}}d\mu\le\sum_{j=0}^{\infty}e^{-\f{(8nj)^2}4}f(8n(j+1))\\
\le&C_4\sum_{j=0}^{\infty}e^{-\f{(8nj)^2}4}e^{2n(\log (8n)+\log(j+1))^2}\le C_5,
\endaligned
\end{equation}
where $C_5$ is a constant depending only on $n,f(8n)$. Observe (\ref{1.3}), we finish the proof.
\end{proof}

If $M$ is an entire graphic self shrinking hypersurface in
$\R^{n+1}$, then $M$ has Euclidean volume growth. Then by
\cite{EH}, we know that $M$ is a hyperplane, which is also
obtained in \cite{W}.

\begin{remark}
Let $M^n$ be a complete properly immersed self-shrinker in
$\R^{n+m}$, then \eqref{0.9} yields(see also \cite{CM1})
\begin{equation}\aligned\label{mean}
\int_M|X|^2\r=2n\int_M\r.
\endaligned
\end{equation}
For any $0<\ep<\sqrt{2n}$ we obtain
\begin{equation}\aligned\nonumber
2\sqrt{2n}\ep\int_{M\backslash D_{\sqrt{2n}+\ep}}\r&\le\int_{M\backslash D_{\sqrt{2n}+\ep}}(|X|^2-2n)\r=\int_{D_{\sqrt{2n}+\ep}}(2n-|X|^2)\r
\le2n\int_{D_{\sqrt{2n}}}\r.
\endaligned
\end{equation}
Let $\e$ be a cut-off function satisfying $\e\mid_{B_{\sqrt{2n}-\ep}}\equiv1$, $\e\mid_{\R^{n+m}\backslash B_{\sqrt{2n}}}\equiv0$ and $|\bar{\na}\e|\le\f1\ep$, then
\begin{equation}\aligned\nonumber
\sqrt{2n}\ep\int_{D_{\sqrt{2n}-\ep}}\r&\le\int_{D_{\sqrt{2n}}}(2n-|X|^2)\e\r=2\int_{D_{\sqrt{2n}}}\na\e\cdot\f{X^T}{|X|}\r
&\le\f2\ep\int_{D_{\sqrt{2n}}\backslash D_{\sqrt{2n}-\ep}}\r.
\endaligned
\end{equation}
Hence, we conclude that there is a constant $C_6,C_7$ depending only on $n$ and $\ep$, such that
\begin{equation}\aligned\label{estofr}
\int_{M}\r d\mu\le C_6\int_{M\cap (B_{\sqrt{2n}+\ep}\backslash B_{\sqrt{2n}-\ep})}\r d\mu.
\endaligned
\end{equation}
Combining \eqref{1.3} and \eqref{estofr}, we have
$$\int_{D_r}1d\mu\le C_7r^n\int_{M\cap (B_{\sqrt{2n}+\ep}\backslash B_{\sqrt{2n}-\ep})}1d\mu.$$
\end{remark}

\begin{corollary}\label{Ftleq1}
Let $M^n$ be a complete non-compact properly immersed
self-shrinker  in $\R^{n+m}$, then $F_t(M)\le F_1(M)$ for any
$t>0$.
\end{corollary}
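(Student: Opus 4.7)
The approach is to compute $F_t'(M)$ directly and show its sign changes from positive to negative at $t=1$, forcing the global maximum to be attained there. Since Theorem 2.2 gives Euclidean volume growth for $M$, the Gaussian weight makes $F_t(M)$ finite and justifies differentiation under the integral sign, yielding
\[
F_t'(M) \;=\; \f14(4\pi)^{-n/2}\, t^{-n/2-1}\int_M \left(\f{|X|^2}{t}-2n\right) e^{-|X|^2/(4t)}\, d\mu.
\]

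The crucial step is to combine this with identity (1.4) from the proof of Theorem 2.2, namely
\[
-e^{|X|^2/(4t)}\, \mathrm{div}\bigl(e^{-|X|^2/(4t)}\, \na |X|^2\bigr) \;=\; |X^N|^2 + \f{|X^T|^2}{t} - 2n.
\]
Writing $|X|^2=|X^T|^2+|X^N|^2$ in the integrand of $F_t'$ and rearranging yields the pointwise identity
\[
\left(\f{|X|^2}{t}-2n\right) e^{-|X|^2/(4t)} = \left(\f{1}{t}-1\right)|X^N|^2 e^{-|X|^2/(4t)} - \mathrm{div}\bigl(e^{-|X|^2/(4t)}\, \na|X|^2\bigr).
\]
I would integrate this over $D_r$ and apply the divergence theorem, so that the divergence contribution becomes a boundary integral bounded by $2r\, e^{-r^2/(4t)}\,|\p D_r|$. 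The coarea formula applied to $u=|X|$ gives $|\p D_r|\le d|D_r|/dr$, and $|D_r|\le C r^n$ from Theorem 2.2; the mean value theorem then supplies a sequence $r_j\to\infty$ along which $|\p D_{r_j}|$ grows at most polynomially, so the Gaussian factor wins and the boundary term vanishes along $r_j$. Passing to the limit gives
\[
F_t'(M) \;=\; \f14(4\pi)^{-n/2}\, t^{-n/2-1}\left(\f{1}{t}-1\right)\int_M |X^N|^2 e^{-|X|^2/(4t)}\, d\mu.
\]

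Since the remaining integral is nonnegative, $F_t'(M)\ge 0$ on $(0,1]$ and $F_t'(M)\le 0$ on $[1,\infty)$, so $F_t(M)$ is maximized at $t=1$, which gives $F_t(M)\le F_1(M)$ for every $t>0$. The principal subtlety is the vanishing of the boundary term when $t<1$: the inequality exploited in Theorem 2.2, which used $|X^N|^2\ge |X^N|^2/t$ and required $t\ge 1$ to absorb the divergence into a non-positive boundary contribution, is unavailable in this regime. Instead one keeps the divergence as an exact equality and kills the boundary integral along a subsequence using the polynomial area estimate from Theorem 2.2.
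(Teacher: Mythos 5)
Your proof is correct, and while it turns on the same two ingredients as the paper's argument --- the divergence identity behind \eqref{1.4} and the Euclidean volume growth of Theorem \ref{Vol} --- the execution is genuinely different. The paper splits into two regimes: for $t\ge1$ it recycles the integrated inequality \eqref{1.2} (there the boundary term $-2|X^T|e^{-|X|^2/(4t)}$ is nonpositive, so $F_t'(D_r)\le0$), while for $0<t\le1$ it keeps only the one-sided bound $-e^{|X|^2/(4t)}\mathrm{div}(e^{-|X|^2/(4t)}\na|X|^2)\le\f{|X|^2}{t}-2n$ and shows $\lim_{r\rightarrow\infty}\int_{D_r}-\mathrm{div}(e^{-|X|^2/(4t)}\na|X|^2)\,d\mu=0$ by a cutoff-function computation, see \eqref{1.5}, concluding $F_t'(M)\ge0$ there. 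You instead keep the divergence as an exact identity, kill the boundary contribution along a subsequence $r_j\rightarrow\infty$ chosen by the coarea/pigeonhole bound $|\p{D_{r_j}}|\le Cr_j^{\,n}$ together with the Gaussian factor, and arrive at the closed formula $F_t'(M)=\f14(4\pi)^{-n/2}t^{-n/2-1}\left(\f1t-1\right)\int_M|X^N|^2e^{-|X|^2/(4t)}\,d\mu$ valid for every $t>0$, from which the sign, and hence the maximality of $t=1$, is immediate. Your route buys a unified quantitative monotonicity statement (increasing on $(0,1]$, decreasing on $[1,\infty)$, strictly so unless $X^N\equiv0$ on $M$), whereas the paper's route is more economical, since both regimes reuse estimates already in place from the proof of Theorem \ref{Vol}. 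The points to spell out in a final write-up are exactly the ones you flag: differentiation under the integral sign on compact $t$-intervals is justified by a dominated-convergence bound of the form $\int_M(1+|X|^2)e^{-|X|^2/(4t)}d\mu<\infty$ (which follows from the volume growth), the divergence theorem should be applied at regular values $r$ of $|X|$ (almost every $r$, by Sard), and the convergence $\int_{D_{r_j}}\rightarrow\int_M$ of the two surviving terms follows from the same absolute integrability.
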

\begin{proof}
Let $\z$ is a cut-off function such that
\begin{eqnarray*}
   \z(|X|)= \left\{\begin{array}{ccc}
     1     & \quad\ \ \ {\rm{if}} \ \ \  X\in B_r \\ [3mm]
     linear    & \quad\ \ \ {\rm{if}} \ \ \  X\in B_{2r}\setminus B_r\\ [3mm]
     0  & \quad\quad\  {\rm{if}} \ \ \  X\in \R^{n+m}\setminus B_{2r},
     \end{array}\right.
\end{eqnarray*}
Combining (\ref{1.4}), for any $t>0$ one gets
\begin{equation}\aligned\nonumber
&\left|\int_{D_r} -\mathrm{div}(e^{-\f{|X|^2}{4t}}\na|X|^2)d\mu\right|\\
\le&\left|\int_{M} -\mathrm{div}(e^{-\f{|X|^2}{4t}}\na|X|^2)\z d\mu\right|+\left|\int_{M\setminus D_r} -\mathrm{div}(e^{-\f{|X|^2}{4t}}\na|X|^2)\z d\mu\right|\\
\le&\left|\int_{M}(\na|X|^2\cdot\na\z) e^{-\f{|X|^2}{4t}} d\mu\right|+\int_{M\setminus D_r} \left||X^N|^2+\f{|X^T|^2}t-2n\right|e^{-\f{|X|^2}{4t}}d\mu\\
\le&\f2r\int_{M}|X|e^{-\f{|X|^2}{4t}} d\mu+\int_{M\setminus D_r} \left((1+\f1t)|X|^2+2n\right)e^{-\f{|X|^2}{4t}}d\mu,\\
\endaligned
\end{equation}
which implies
\begin{equation}\aligned\label{1.5}
\lim_{r\rightarrow\infty}\int_{D_r} -\mathrm{div}(e^{-\f{|X|^2}{4t}}\na|X|^2)d\mu=0.
\endaligned
\end{equation}
When $0<t\le1$, from (\ref{1.4}), we have
\begin{equation}\aligned\nonumber
-e^{\f{|X|^2}{4t}}\mathrm{div}(e^{-\f{|X|^2}{4t}}\na_M|X|^2)\le\f{|X|^2}t-2n,
\endaligned
\end{equation}
then
\begin{equation}\aligned\label{1.6}
F_t'(D_r)\ge\pi^{-\f n2}(4t)^{-(\f n2+1)}\int_{D_r} -\mathrm{div}(e^{-\f{|X|^2}{4t}}\na|X|^2)d\mu.\\
\endaligned
\end{equation}
Combining (\ref{1.5}) and (\ref{1.6}), we know
$$F_t'(M)=\lim_{r\rightarrow\infty}F_t'(D_r)\ge0,$$
which implies
$$F_t(M)\le F_1(M),\quad for \ 0<t\le1.$$
On the other hand, by (\ref{1.2}), we have
$$(4\pi R^2)^{-n/2}\int_{D_r}e^{-\f{|X|^2}{4R^2}}d\mu
\le (4\pi)^{-n/2}\int_{D_r}e^{-\f{|X|^2}{4}}d\mu\ \quad\ for\
any\quad R\ge1.$$ Let $r$ go to infinity, thus we finish the
proof.
\end{proof}
In hypersurface case, Colding and Minicozzi II (Lemma 7.10 in \cite{CM1}) has proved a more general result than Corollary \ref{Ftleq1}.

\section{The First Eigenvalue of Self-shrinkers and Compactness Theorem}

Let $\R^{n+1}$ be Euclidean space with the canonical metric, with
Levi-Civita connection $\overline{\nabla}$, Laplacian operator $\overline{\Delta}$, and divergence
$\overline{\mathrm{div}}$. Let
$\overline{\mathcal{L}}=\overline{\Delta}-\frac{1}{2}\langle
X,\overline{\nabla}\cdot\rangle$. Reilly derives a useful integral
formula for Laplacian operator operator \cite{R}(see also \cite{CW}). Now, we
derive a Reilly type formula for the operator $\mathcal{L}$.
\begin{theorem}\label{Reilly}
Let $\Omega$ be a bounded domain in $\R^{n+1}$ with smooth
boundary. Suppose that $f$  satisfies
\begin{eqnarray*}
     \left\{\begin{array}{ccc}
     \overline{\mathcal{L}}f=g     & \quad\ \ \ {\rm{in}} \ \ \  \Omega \\ [3mm]
     f=u    & \quad\quad\  {\rm{on}} \ \ \  \partial\Omega,
     \end{array}\right.
\end{eqnarray*}
where $g$ is a smooth function on $\Omega$ and $u$ is a smooth
function on $\partial\Omega$,  then
$$\int_\Omega
g^2\rho=\int_\Omega|\overline{\nabla}^2f|^2\rho+\frac{1}{2}\int_\Omega|\overline{\nabla}f|^2\rho+2\int_{\partial\Omega}f_\nu\mathcal{L}u\rho-
\int_{\partial\Omega}h(\nabla u,\nabla
u)\rho-\int_{\partial\Omega}f_\nu^2\left(\frac{\langle
X,\nu\rangle}{2}+H\right)\rho,$$ where $h(\cdot,\cdot)=\lan
B(\cdot,\cdot),\nu\ran$, $B$ is the second
fundamental form on $\p\Om$, $\nu$ is the outward unit normal vector
field on $\p\Om$ and mean curvature $H=\mathrm{trace}(h)$.
\end{theorem}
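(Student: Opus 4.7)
The plan is to adapt the classical proof of Reilly's formula by replacing the Euclidean Bochner identity with its weighted (Bakry--\'Emery) analogue for the drift Laplacian $\overline{\mathcal{L}}$. With drift potential $\phi = |X|^2/4$ one has $\overline{\nabla}^2\phi = \tfrac{1}{2}\,\mathrm{Id}$, so in the flat ambient geometry the weighted Bochner identity takes the clean form
\begin{equation*}
\tfrac{1}{2}\,\overline{\mathcal{L}}|\overline{\nabla}f|^2
= |\overline{\nabla}^2 f|^2
+ \langle\overline{\nabla}f,\overline{\nabla}\,\overline{\mathcal{L}}f\rangle
+ \tfrac{1}{2}|\overline{\nabla}f|^2 .
\end{equation*}
This is the identity responsible for the ``free'' term $\tfrac{1}{2}\int_\Omega|\overline{\nabla}f|^2\rho$ in the statement; it plays the role of a Bakry--\'Emery Ricci lower bound $\overline{\mathrm{Ric}}_\phi\ge\tfrac{1}{2}g$. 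It can be verified directly from the standard Euclidean Bochner formula together with the identity $\overline{\nabla}\langle X,\overline{\nabla}f\rangle = \overline{\nabla}f + \overline{\nabla}^2 f\cdot X$.

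Next I would multiply by $\rho = e^{-|X|^2/4}$ and integrate over $\Omega$. Since $\overline{\mathcal{L}}\psi\cdot\rho = \overline{\mathrm{div}}(\rho\,\overline{\nabla}\psi)$, the left-hand side collapses to the boundary integral $2\int_{\partial\Omega}\overline{\nabla}^2 f(\overline{\nabla}f,\nu)\rho$. Integration by parts for the cross term yields $\int_\Omega\langle\overline{\nabla}f,\overline{\nabla}g\rangle\rho = \int_{\partial\Omega}f_\nu g\,\rho - \int_\Omega g^2\rho$. Solving for $\int_\Omega g^2\rho$ leaves the two desired interior terms, plus the boundary combination $\int_{\partial\Omega}[f_\nu g - \overline{\nabla}^2 f(\overline{\nabla}f,\nu)]\rho$ still to be unpacked.

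On $\partial\Omega$ I would split $\overline{\nabla}f = \nabla u + f_\nu\nu$ and invoke two classical hypersurface identities. First, the tangential--normal decomposition of the Laplacian gives $\overline{\Delta}f = \Delta u + f_{\nu\nu} - H f_\nu$ (a check on a round sphere of radius $r$, for which $H = -n/r$ under the paper's convention $B(V,W) = (\overline{\nabla}_V W)^N$, pins down this sign); combining with the drift term yields $g|_{\partial\Omega} = \mathcal{L}u + f_{\nu\nu} - f_\nu\bigl(H + \tfrac{1}{2}\langle X,\nu\rangle\bigr)$. Second, the Weingarten relation $\overline{\nabla}_T\nu = -\mathrm{shape}(T)$, with $\langle\mathrm{shape}(T),W\rangle = h(T,W)$, gives $\overline{\nabla}^2 f(\nabla u,\nu) = \langle\nabla f_\nu,\nabla u\rangle + h(\nabla u,\nabla u)$. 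Inserting both decompositions and cancelling the matching $f_\nu f_{\nu\nu}$ contributions reduces the boundary integrand to $f_\nu\mathcal{L}u - \langle\nabla f_\nu,\nabla u\rangle - h(\nabla u,\nabla u) - f_\nu^2\bigl(H + \tfrac{1}{2}\langle X,\nu\rangle\bigr)$.

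The final step is a weighted integration by parts on the closed hypersurface $\partial\Omega$ carrying the restricted weight $\rho$: since $\partial\Omega$ has no boundary, $\int_{\partial\Omega}\langle\nabla f_\nu,\nabla u\rangle\rho = -\int_{\partial\Omega}f_\nu\,\mathcal{L}u\,\rho$, and the two resulting copies of $f_\nu\mathcal{L}u$ combine into the factor $2f_\nu\mathcal{L}u$ in the statement. I expect the main bookkeeping obstacle to be the sign conventions: the $H f_\nu$ coming from the Laplacian split, the $h(\nabla u,\nabla u)$ coming from Weingarten, and the $\tfrac{1}{2}\langle X,\nu\rangle f_\nu$ coming from the drift must all be collected consistently under the paper's convention $B(V,W) = (\overline{\nabla}_V W)^N$, $h = \langle B,\nu\rangle$ before they collapse into the stated form; once those signs are fixed the remainder is a routine Stokes calculation with weight $\rho$.
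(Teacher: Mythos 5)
Your proposal is correct and follows essentially the same route as the paper's proof: the weighted Bochner identity $\tfrac12\overline{\mathcal{L}}|\overline{\nabla}f|^2=|\overline{\nabla}^2f|^2+\langle\overline{\nabla}f,\overline{\nabla}g\rangle+\tfrac12|\overline{\nabla}f|^2$ integrated against $\rho$, the boundary decomposition $\overline{\Delta}f=\Delta u+f_{\nu\nu}-Hf_\nu$ together with the Weingarten term $h(\nabla u,\nabla u)$ (the paper does this with an adapted frame and the commutator $[e_{n+1},e_\alpha]$), and the weighted integration by parts on the closed hypersurface $\partial\Omega$ turning $\langle\nabla f_\nu,\nabla u\rangle$ into $-f_\nu\mathcal{L}u$, which yields the factor $2$. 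The only blemish is the stray factor in ``$2\int_{\partial\Omega}\overline{\nabla}^2f(\overline{\nabla}f,\nu)\rho$'' (with the $\tfrac12$ in front of $\overline{\mathcal{L}}|\overline{\nabla}f|^2$ the coefficient is $1$), which you implicitly correct when you work with the boundary combination $f_\nu g-\overline{\nabla}^2f(\overline{\nabla}f,\nu)$, so the final bookkeeping and the stated formula come out right.
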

\begin{proof} Let $\{\frac{\partial}{\partial x_i}\}_{i=1}^{n+1}$ be a canonical basis of
$\R^{n+1}$, $f_i=\frac{\partial f}{\partial x_i}$, and so on. Since $\overline{\mathcal{L}}f=g$, then we have
$$\overline{\mathcal{L}}f_i=\sum_j(f_{ijj}-\frac{1}{2}x_jf_{ij})=\frac{\partial}{\partial x_i}(g+\frac{1}{2}\sum_jx_jf_j)-\frac{1}{2}\sum_jx_jf_{ij}=g_i+\frac{f_i}{2},$$
and
\begin{equation}\label{4.1}
\frac{1}{2}\overline{\mathcal{L}}|\overline{\nabla}f|^2=\sum_{i,j}f_{ij}^2+f_i\overline{\mathcal{L}}f_i=|\overline{\nabla}^2f|^2+\langle\overline{\nabla} f,\overline{\nabla} g\rangle+\frac{1}{2}|\overline{\nabla}f|^2.
\end{equation}
Integrating the equality (\ref{4.1}) by parts we get
\begin{equation}\aligned\label{4.2}
\frac{1}{2}\int_\Omega\overline{\mathcal{L}}|\overline{\nabla}f|^2\rho&=\int_\Omega|\overline{\nabla}^2f|^2\rho+\int_\Omega\langle\overline{\nabla} f,\overline{\nabla} g\rangle\rho+\frac{1}{2}\int_\Omega|\overline{\nabla}f|^2\rho\\
&=\int_\Omega|\overline{\nabla}^2f|^2\rho+\frac{1}{2}\int_\Omega|\overline{\nabla}f|^2\rho+\int_\Omega(\overline{\mathrm{div}}(\rho g\overline{\nabla} f)-g\overline{\mathrm{div}}(\rho\overline{\nabla}f))\\
&=\int_\Omega|\overline{\nabla}^2f|^2\rho+\frac{1}{2}\int_\Omega|\overline{\nabla}f|^2\rho+\int_{\partial\Omega} f_\nu g\rho-\int_\Omega g^2\rho.\\
\endaligned
\end{equation}
On the other hand, we select an orthonormal frame field
$\{e_1,\cdots,e_{n+1}\}$ near the boundary of $\Omega$ such that
$\{e_1,\cdots,e_{n}\}$ are tangential to $\partial\Omega$, and
$\nabla_{e_\alpha}e_\beta=\overline{\nabla}_{e_{n+1}}e_i=0$ at a
considered point in $\partial\Omega$  and $\nu=e_{n+1}$ is the
outward unit normal vector. Let $h_{\alpha\beta}=\langle
\overline{\nabla}_{e_\alpha}e_\beta,\nu\rangle=\langle
B(e_\alpha,e_\beta),\nu\rangle$, then integrating by parts gives
\begin{equation}\aligned\label{4.3}
&\frac{1}{2}\int_\Omega\overline{\mathcal{L}}|\overline{\nabla}f|^2\rho
=\frac{1}{2}\int_\Omega\overline{\mathrm{div}}(\rho\overline{\nabla}|\overline{\nabla}f|^2)=\int_{\partial\Omega}\sum_{i=1}^{n+1}(e_if)(e_{n+1}e_if)\rho\\
=&\int_{\partial\Omega}f_\nu(e_{n+1}e_{n+1}f)\rho+\sum_{\alpha=1}^{n}\int_{\partial\Omega}(e_\alpha f)(e_\alpha e_{n+1}f)\rho+\sum_{\alpha=1}^{n}\int_{\partial\Omega}[e_{n+1},e_\alpha](f)(e_\alpha f)\rho\\
=&\int_{\partial\Omega}f_\nu(e_{n+1}e_{n+1}f)\rho-\int_{\partial\Omega}f_\nu(\mathcal{L}u)\rho+\sum_{\alpha,\beta=1}^{n}\int_{\partial\Omega}
h_{\alpha\beta}e_\beta( f)e_\alpha( f)\rho.\\
\endaligned
\end{equation}
Moreover,
\begin{equation}\aligned\label{4.4}
e_{n+1}e_{n+1}f
&=\sum_{i=1}^{n+1}\left(e_ie_if-\left(\overline{\nabla}_{e_i}e_i\right)f\right)
-\sum_{\alpha=1}^n(e_\alpha e_\alpha f)+\sum_{\alpha=1}^n\left(\overline{\nabla}_{e_\alpha}e_\alpha\right) f\\
&=\overline{\Delta}f-\Delta f+\sum_{\alpha=1}^nh_{\alpha\alpha}f_\nu=\overline{\mathcal{L}}f-\mathcal{L}u+\frac{\langle X,\nu\rangle}{2}f_\nu+Hf_\nu.
\endaligned
\end{equation}
Combining (\ref{4.2})-(\ref{4.4}), we complete the proof.
\end{proof}

We would use the above Reilly type formula to estimate the first
eigenvalue of $\mathcal{L}$ operator on a self-shrinker in Euclidean
space. Now the ambient space is not compact. We need the
following  boundary gradient estimate for
$\overline{\mathcal{L}}$.
\begin{lemma}\label{ptest}
Let $\Si$ be a compact embedded hypersurface in $\R^{n+1}$,
$\Omega$ be a bounded domain in $\R^{n+1}$ with $\p\Om=\Si\cup
S_R$. Here $S_R$ is an n-sphere with radius $R$ and centered at the
origin for any $R\ge\sqrt{2(n+1)}+diam(\Si)$. We consider
Dirichlet problem
\begin{eqnarray*}
     \left\{\begin{array}{ccc}
     \overline{\mathcal{L}}f=0     &  \ \ {\rm{in}} \ \ \  \Omega \\ [3mm]
     f\mid_\Si=u\ ,\ f\mid_{S_R}=0 \ ,
     \end{array}\right.
\end{eqnarray*}
where $u$ is a smooth function on $\Si$, then $|\overline{\nabla}
f(X_0)|\leq3\max_{X\in \Si}|u(X)|R$ for any $X_0\in S_R$.
\end{lemma}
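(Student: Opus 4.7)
The plan is to construct an $\overline{\mathcal{L}}$-superharmonic barrier $w$ on $\Omega$ satisfying $w(X_0)=0$, $w\ge|f|$ on $\partial\Omega$, and then conclude by a Hopf-type boundary comparison. Since $f\equiv 0$ on the smooth sphere $S_R$, the tangential components of $\overline{\nabla} f$ at $X_0$ vanish, so it suffices to bound the normal derivative along $\nu=X_0/R$; the weak maximum principle (valid because $\overline{\mathcal{L}}$ has no zeroth-order term) also yields $|f|\le M:=\max_\Si|u|$ on $\Omega$. For the barrier I take
\[
w(X)=C\bigl(e^{R^2/2}-e^{\langle X,X_0\rangle/2}\bigr),\qquad C:=2Me^{-R^2/2}.
\]
A direct computation, using $\overline{\nabla}e^{\langle X,X_0\rangle/2}=\tfrac12 X_0\,e^{\langle X,X_0\rangle/2}$ and $|X_0|^2=R^2$, gives
\[
\overline{\mathcal{L}}w=-\tfrac{C}{4}\,e^{\langle X,X_0\rangle/2}\bigl(R^2-\langle X,X_0\rangle\bigr)\le 0 \quad\text{on}\ \overline{B_R},
\]
since $\langle X,X_0\rangle\le|X|R\le R^2$ by Cauchy--Schwarz. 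Moreover $w(X_0)=0$ and $w\ge 0$ on $\overline{B_R}$; on $\Si$ the hypothesis $R\ge\sqrt{2(n+1)}+\mathrm{diam}(\Si)$ gives $\langle X,X_0\rangle\le (R-\sqrt{2(n+1)})R$, whence $e^{\langle X,X_0\rangle/2}\le e^{R^2/2}e^{-(n+1)}\le\tfrac12 e^{R^2/2}$ (using $R\ge\sqrt{2(n+1)}$ and $n\ge 1$), so $w\ge\tfrac{C}{2}e^{R^2/2}=M$ on $\Si$.

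With $w$ so constructed, $w\pm f$ are $\overline{\mathcal{L}}$-superharmonic on $\Omega$ and non-negative on $\partial\Omega=\Si\cup S_R$, so the weak maximum principle gives $|f|\le w$ in $\Omega$. Since $(w\pm f)(X_0)=0$ and both are non-negative inside $\Omega$, the inward normal derivatives at $X_0$ are non-negative, yielding $|\partial_\nu f(X_0)|\le|\partial_\nu w(X_0)|$. Finally $\overline{\nabla}w(X_0)=-\tfrac{C}{2}e^{R^2/2}X_0$, so
\[
|\overline{\nabla}f(X_0)|=|\partial_\nu f(X_0)|\le \tfrac{CR}{2}e^{R^2/2}=MR\le 3MR,
\]
finishing the proof.

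The main obstacle is guessing the barrier. The natural polynomial candidate $R^2-|X|^2$ fails because $\overline{\mathcal{L}}(R^2-|X|^2)=|X|^2-2(n+1)$ has the \emph{wrong sign} on the region of interest $|X|^2\ge 2(n+1)$, making it a sub- rather than a super-solution. The exponential $e^{\langle X,X_0\rangle/2}$ is essentially forced by the Ornstein--Uhlenbeck drift $-\tfrac12\langle X,\overline{\nabla}\cdot\rangle$ of $\overline{\mathcal{L}}$: its Laplacian and drift contributions combine cleanly into the manifestly non-negative remainder $(R^2-\langle X,X_0\rangle)$ on $\overline{B_R}$, which is what produces the superharmonicity and the clean constant $MR$ in the final bound.
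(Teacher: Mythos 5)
Your proof is correct in substance and even yields the sharper constant $MR\le 3MR$, but it is a genuinely different barrier construction from the paper's. The paper argues locally at $X_0$: it takes the exterior tangent ball determined by $\overline{B_R(0)}\cap\overline{B_R(Y_0)}=\{X_0\}$ and uses the radial barriers $w^\pm(d)=\pm3u_0\bigl(1-e^{-(d^2-R^2)/2}\bigr)$, $d=|X-Y_0|$, only on the thin lens $B_{\sqrt{R^2+1}}(Y_0)\cap\Omega$, comparing with $f$ on the boundary of that lens (where $|f|\le u_0$ by the maximum principle and $3(1-e^{-1/2})>1$); the hypothesis $R\ge\sqrt{2(n+1)}$ enters there to make $w^\pm$ super/subsolutions. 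You instead use a single global barrier $w=C\bigl(e^{R^2/2}-e^{\langle X,X_0\rangle/2}\bigr)$ on all of $\Omega$, exploiting that $e^{\langle X,X_0\rangle/2}$ is an exact $\overline{\mathcal{L}}$-subsolution on $\overline{B_R}$ because the Ornstein--Uhlenbeck drift dominates; this avoids the tangent-ball geometry, reduces the supersolution check to one line, and gives $MR$ at $X_0$ directly, with the final Hopf-type comparison of normal derivatives identical to the paper's. The one soft spot is your step ``$\langle X,X_0\rangle\le(R-\sqrt{2(n+1)})R$ on $\Sigma$,'' which amounts to $\Sigma\subset\overline{B}_{R-\sqrt{2(n+1)}}$ and is not literally implied by $R\ge\sqrt{2(n+1)}+\mathrm{diam}(\Sigma)$ unless one also knows $\Sigma$ passes near the origin; however, this is not a defect relative to the paper, whose own comparison tacitly requires $\Sigma$ to avoid the lens near $S_R$ (a separation of order $1/R$), and in the only applications of the lemma $\Sigma$ is a compact self-shrinker (hence meets $\overline{B}_{\sqrt{2n}}$) and $R$ is taken large, so both requirements hold; note also that your argument really only needs $\langle X,X_0\rangle\le R^2-2\log 2$ on $\Sigma$, i.e.\ the same $O(1/R)$ separation the paper uses implicitly.
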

\begin{proof} For any $X_0\in S_R$, there is a unique $Y_0\in\R^{n+1}$ such that
$\overline{B_R(0})\cap \overline{B_R(Y_0)}=X_0$. Let $u_0=\max_{X\in \Si}|u(X)|$ and define two barrier functions
$w^\pm(d)=\pm3u_0\left(1-\exp\left(-\frac{d^2-R^2}{2}\right)\right)$, $d(X)=|X-Y_0|$ on the ball $B_{\sqrt{R^2+1}}(Y_0)$.
Now, we prove that the two functions $w^\pm$ satisfy\\
\qquad \ \ (i)\qquad\qquad $\pm\overline{\mathcal{L}}w^\pm<0$\qquad\qquad\quad in $B_{\sqrt{R^2+1}}(Y_0)\cap\Omega$,\\
\qquad \ (ii)\qquad $w^\pm(X_0)=f(X_0)=0$,\\
\qquad (iii)\qquad $w^-(X)\leq f(X)\leq w^+(X), \quad X\in\partial B_{\sqrt{R^2+1}}(Y_0)\cap\Omega$.\\

Let $Y=X-Y_0$, then $d=|Y|$, $\overline{\na} d=\f Y{|Y|}$ and $\overline{\De}d=\f n{|Y|}$, hence
\begin{equation}\aligned
\overline{\mathcal{L}}w^+=(w^+)'\overline{\mathcal{L}}d+(w^+)''|\overline{\na} d|^2=(w^+)''+(w^+)'\left(\f n{|Y|}-\f12\f{X\cdot Y}{|Y|}\right).
\endaligned\nonumber
\end{equation}
Since $(w^+)'=3u_0de^{-\f{d^2-R^2}2}$ and $(w^+)''=3u_0(1-d^2)e^{-\f{d^2-R^2}2}$, then for any $X\in B_{\sqrt{R^2+1}}(Y_0)\cap\Omega$, we have $|X|\le R$, $d=|Y|\ge R$ and
\begin{equation}\aligned
\overline{\mathcal{L}}w^+\le&3u_0(1-d^2)e^{-\f{d^2-R^2}2}+3u_0de^{-\f{d^2-R^2}2}\left(\f nd+\f R2\right)\\
=&3u_0e^{-\f{d^2-R^2}2}\left(1-d^2+n+\f{R}2d\right)\le0\quad (since\ R\ge\sqrt{2(n+1)}\ ).
\endaligned\nonumber
\end{equation}
Thus, (i) is proved. (ii) is obvious. By maximum principle, one can obtain
$$|f(X)|\le u_0, \ \ \mathrm{for\ any} \ X\in\Om.$$
When $X\in\partial B_{\sqrt{R^2+1}}(Y_0)\cap\Omega$, $w^+(X)=3u_0(1-e^{-1/2})\ge f(X)$ and $w^-(X)=-3u_0(1-e^{-1/2})\le f(X)$. Thus, (iii) is proved.

Comparison principle of elliptic equations gives $$w^-(X)\leq f(X)\leq w^+(X), \quad X\in B_{\sqrt{R^2+1}}(Y_0)\cap\Omega.$$
Therefore, the normal derivatives of $w^\pm$ and $f$ satisfy
$$\f{\p w^-}{\p\nu} (X_0)\le\f{\p f}{\p\nu} (X_0)\le\f{\p w^+}{\p\nu} (X_0),$$
which completes the proof.
\end{proof}

We define the first (Neumann) eigenvalue $\la_1$ of the self-adjoint operator
$\mathcal{L}$ in complete self-shrinkers $M^n$ in $\R^{n+1}$ by
$$\lambda_1=\inf_{f\in C^\infty(M)}\left\{\int_M|\nabla f|^2\rho;\quad
\int_Mf^2\rho=1,\int_Mf\rho=0\right\}.$$ By \eqref{4.0}, one
has $\la_1\le\f12$. From the following lemma, $\la_1$ can be arrived
by the first eigenfunction $u$ and $\la_1>0$ for any complete properly immersed self-shrinker.

\begin{lemma}
Let $M^n$ be a complete properly immersed self-shrinker in
$\R^{n+1}$, then there exists a smooth function $u$ with
$\int_Mu^2\rho=1, \int_Mu\rho=0$ such that
$\mathcal{L}u+\lambda_1u=0$ and $\int_M|\nabla
u|^2\rho=\lambda_1$.
\end{lemma}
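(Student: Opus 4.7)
The strategy is the direct method of the calculus of variations in the weighted Hilbert space $L^2(M,\r\,d\mu)$, whose finite total mass is guaranteed by the Euclidean volume growth of Theorem \ref{Vol}. The Gaussian weight $\r$ together with this volume growth should give a compact embedding $W^{1,2}(M,\r)\hookrightarrow L^2(M,\r)$, after which $\la_1$ is attained by standard arguments.

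Take a minimizing sequence $\{f_k\}$ with $\int_M f_k^2\r=1$, $\int_M f_k\r=0$, and $\int_M|\na f_k|^2\r\to\la_1$. The essential ingredient is the weighted decay estimate
\[
\int_M |X|^2 f^2\r\le 4n\int_M f^2\r+16\int_M|\na f|^2\r,
\]
which I would obtain from the self-shrinker identity $\mathcal{L}|X|^2=2n-|X|^2$ of \eqref{0.9}: pair $|X|^2=2n-\mathcal{L}|X|^2$ with $f^2\r$, integrate by parts against the $\r$-divergence form of $\mathcal{L}$, and absorb the resulting cross term $4\int f\langle X^T,\na f\rangle\r$ by Cauchy--Schwarz. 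Boundary terms at infinity vanish via a cut-off argument using proper embeddedness together with the finiteness of $\int_M\r\,d\mu$. The upshot is the uniform tail bound $\int_{M\setminus D_R}f_k^2\r\le C/R^2$ along the minimizing sequence.

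On any fixed $D_R$ the weight $\r$ is pinched between positive constants, so $\{f_k\}$ is bounded in the ordinary $W^{1,2}(D_R)$; Rellich--Kondrachov extracts an $L^2(D_R)$-convergent subsequence, and a diagonal argument combined with the uniform tail bound upgrades this to strong convergence in $L^2(M,\r)$ to some limit $u$. Strong $L^2(M,\r)$ convergence, tested against $1\in L^2(M,\r)$ and against $u$ itself, preserves the normalizations $\int_M u\r=0$ and $\int_M u^2\r=1$, while weak lower semicontinuity of the Dirichlet energy gives $\int_M|\na u|^2\r\le\la_1$, so $u$ realizes the infimum. The Euler--Lagrange equation reads $\mathcal{L}u+\la_1 u=0$ in the weak sense, and standard elliptic regularity for the uniformly elliptic operator $\mathcal{L}$ upgrades $u$ to a smooth solution.

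The main obstacle is the strong $L^2(M,\r)$ compactness of the minimizing sequence, which is not automatic on a non-compact manifold; overcoming it hinges on using the self-shrinker identity $\mathcal{L}|X|^2=2n-|X|^2$ to convert an a priori gradient bound into weighted spatial decay of the $f_k$'s. A subsidiary technical point is to justify the integration by parts globally on $M$ via cut-offs, using the Euclidean volume growth from Theorem \ref{Vol} and the rapid decay of $\r$.
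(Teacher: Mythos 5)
Your proposal follows essentially the same route as the paper: a minimizing sequence, local Rellich-type compactness in the weighted spaces, the tail estimate $\int_{M\setminus D_r}f_k^2\rho\le(16+4n)/r^2$ obtained from $\mathcal{L}|X|^2=2n-|X|^2$ exactly as in the paper's inequality \eqref{4.8}, finiteness of $\int_M\rho$ from Theorem \ref{Vol} to preserve the constraints, and then the Euler--Lagrange equation plus elliptic regularity. The only cosmetic difference is at the last step, where the paper handles the mean-zero constraint by minimizing the auxiliary functional $I(f)=\int_M|\nabla f|^2\rho-2\lambda_1\int_M fu\rho$ (so the variation is unconstrained), while you assert the constrained Euler--Lagrange equation directly; that is a standard point and not a gap.
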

\begin{proof} By the definition of $\lambda_1$, there exists a sequence $\{f_i\}$ satisfying
\begin{equation}\aligned\label{4.5}
\int_Mf_i^2\rho=1,\int_Mf_i\rho=0 \quad \mathrm{and} \quad \lim_{i\rightarrow\infty}\int_M|\nabla f_i|^2\rho=\lambda_1.
\endaligned
\end{equation}
Since $\lambda_1\leq1/2$, then there exists a $N_0$ such that, for any $i\geq N_0$, $ \int_M|\nabla f_i|^2\rho\leq1.$

Define two Sobolev spaces $L^2(\Om,\rho)$, $H^1(\Om,\rho)$ for any set $\Om\subset M$
by
\begin{equation}\aligned\nonumber
&L^2(\Om,\rho)=\{f\ ;  \int_\Om f^2\r<\infty\ \},\\
&H^1(\Om,\rho)=\{f\ ;  \int_\Om f^2\r<\infty\ \mathrm{and}\ \int_\Om |\na f|^2\r<\infty\},\\
\endaligned
\end{equation}
respectively.
Since $H^1(D_r,\rho)$ is a Hilbert space, then there is a subsequence $\{f_{n_i}\}$ of $\{f_i\}$ converging to some $u_r\in H^1(D_r,\rho)$ weakly, and there is a subsequence $\{f_{n_{k_i}}\}$ of $\{f_{n_i}\}$ converging to some $u_{r+1} \in H^1(D_{r+1},\rho)$ weakly and so on. Hence we could choose a diagonal sequence, denoted by $f_k$ for simplicity, such that $f_k$ converges to some $u_K \in H^1(K,\rho)$ weakly for any compact set $K\subset M$, i.e., we can define $u$ on $M$ such that $u\mid_K=u_K$. By compact embedding theorem, sequence $f_k$ converges to $u$ in the space $L^2(K,\r)$ strongly for any compact set $K$, then
\begin{equation}\aligned\nonumber
\int_Ku^2\rho=\lim_{k\rightarrow\infty}\int_Kf_k^2\rho\leq1.\\
\endaligned
\end{equation}
By weak convergence in $H^1(D_r,\rho)$, one has
\begin{equation}\aligned\nonumber
&\int_K(\nabla u\cdot\na f_k)\rho=\lim_{j\rightarrow\infty}\int_K(\nabla f_j\cdot\na f_k)\rho\\
\leq&\f12\lim_{j\rightarrow\infty}\int_K(|\nabla f_j|^2+|\na f_k|^2)\rho\le\f{\lambda_1}2+\f12\int_K|\na f_k|^2\rho.\\
\endaligned
\end{equation}
Hence
\begin{equation}\aligned\label{4.6}
\int_Mu^2\rho\leq1,\qquad\int_M|\nabla u|^2\rho\leq\lambda_1.\\
\endaligned
\end{equation}
For any sufficiently small $\epsilon>0$ and compact set $K\subset M$, there exists a $k$ such that $\int_K|u-f_k|^2\rho\leq\epsilon$. Combining (\ref{4.5}) and (\ref{4.6}) and Cauchy inequality, we get
\begin{equation}\aligned\label{4.7}
\left|\int_Ku\rho\right|\leq&\int_K|u-f_k|\rho+\left|\int_{K}f_k\rho\right|\leq\int_K|u-f_k|\rho+\int_{M\backslash K}|f_k|\rho\\
\le&\sqrt{\int_K\rho\int_K|u-f_k|^2\rho}+\sqrt{\int_{M\backslash K}\rho\int_{M\backslash K}|f_k|^2\rho}\\
\leq& \sqrt{\ep\int_K\rho}+\sqrt{\int_{M\backslash K}\rho}.
\endaligned
\end{equation}
Since $M$ has Euclidean volume growth, then (\ref{4.7}) implies $$\int_Mu\rho=0.$$

Now let us prove $\int_Mu^2\rho=1$. By Logarithmic type Sobolev
inequalities on self-shrinkers one has  \cite{E}
\begin{equation}\aligned\label{4.8}
\int_M|X|^2f_k^2\rho\leq16\int_M|\nabla f_k|^2\rho+4n\int_Mf_k^2\rho.
\endaligned
\end{equation}
In fact, multiplying a smooth function $g$ with compact support
on the both sides of \eqref{0.9}, then integrating by parts yield
$$\int_M(|X|^2-2n)g^2\r=\int_M(\na|X|^2\cdot\na g^2)\r\le\f12\int_M|X|^2g^2\r+8\int_M|\na g|^2\r.$$
Using such function $g$ to approach $f_k$, we can also get
(\ref{4.8}).

For any $r>0$ (\ref{4.8}) implies
\begin{equation}\aligned\nonumber
r^2\int_{M\setminus D_r}f_k^2\rho\leq16\int_M|\nabla f_k|^2\rho+4n\int_Mf_k^2\rho\leq16+4n,
\endaligned
\end{equation}
then
\begin{equation}\aligned\label{3.1}
\int_{D_r}u^2\rho=\lim_{k\rightarrow\infty}\int_{D_r}f_k^2\rho=1-\lim_{k\rightarrow\infty}\int_{M\setminus D_r}f_k^2\rho\geq1-\frac{16+4n}{r^2}.
\endaligned
\end{equation}
Combining (\ref{4.6}) one arrives at $\int_Mu^2\rho=1$. By the definition
of $\la_1$, we get $\int_M|\nabla u|^2\rho=\lambda_1$. Let us
define a functional
$$I(f)=\int_M|\nabla f|^2\rho-2\lambda_1\int_Mfu\rho$$
and $$\overline{f}=\f{\int_Mf\r}{\int_M\r},$$ then
\begin{equation}\aligned\nonumber
I(f)=&\int_M|\nabla f|^2\rho-2\lambda_1\int_M(f-\overline{f})u\rho\ge\int_M|\nabla f|^2\rho-\lambda_1\int_M\left((f-\overline{f})^2+u^2\right)\rho\\
=&-\la_1+\int_M|\nabla f|^2\rho-\lambda_1\int_M(f-\overline{f})^2\rho\ge-\la_1.
\endaligned
\end{equation}
Since $I(u)=-\la_1$, then the function $u$ arrives at the minimum of the functional $I(\cdot)$. Hence $\frac{\partial}{\partial \epsilon}\mid_{\epsilon=0}I(u+\epsilon\varphi)=0$ for any $\varphi\in C^\infty_c(M)$. By a simple calculation, we have
$$\int_M(\mathcal{L}u+\lambda_1u)\varphi\rho=0.$$
By the regularity theory of elliptic equations $u$ is a smooth
function (see \cite{GT} for example). We finish the proof.
\end{proof}

Now, we give a uniformly positive lower bound of $\la_1$ for compact
embedded self shrinkers in $\R^{n+1}$.

\noindent {\it \textbf{Proof of Theorem \ref{firsteigen}}.} We have known $0<\la_1\le\f12$ in the previous discussion.
Let $B_R$ be an $n$-ball with radius $R$ and centered at the origin, then there is a $R_0$ such that $M\subset\subset B_{R_0}$. Set $\Omega_R$ be a bounded domain in $\R^{n+1}$ with $\p\Om_R=M\cup \p B_R$ for $R\ge R_0$. We consider the following Dirichlet problem
\begin{eqnarray*}
     \left\{\begin{array}{ccc}
     \overline{\mathcal{L}}f=0     &  \ \ {\rm{in}} \ \ \  \Omega_R \\ [3mm]
     f\mid_M=u\ ,\ f\mid_{\p B_R}=0 \ ,
     \end{array}\right.
\end{eqnarray*}
where $u$ is the first eigenfunction of the self-adjoint operator $\mathcal{L}$ in $M$, i.e., $\mathcal{L}u+\lambda_1u=0$ and $\int_Mu^2\rho=1$. By Lemma \ref{ptest}, we get $|\overline{\nabla} f(Y)|\leq3\max_{X\in M}|u(X)|R$ for any $Y\in \p B_R$. Integrating by parts gives
\begin{equation}\aligned\label{4.9}
\int_Mf_\nu\mathcal{L}u\r=-\la_1\int_Mf_\nu u\r=-\la_1\int_{\Om_R} \overline{\mathrm{div}}(\r f\overline{\na} f)=-\la_1\int_{\Om_R}|\overline{\nabla}f|^2\rho,
\endaligned
\end{equation}
Combining \eqref{0.1}, (\ref{4.9}) and Theorem \ref{Reilly}, we have
\begin{equation}\aligned\label{4.10}
0\ge&\int_{\Omega_R}|\overline{\nabla}^2f|^2\rho+\frac{1}{2}\int_{\Omega_R}|\overline{\nabla}f|^2\rho-2\la_1\int_{\Om_R}|\overline{\nabla}f|^2\rho-
\int_{M}h(\nabla u,\nabla u)\rho-\int_{\p B_R}f_\nu^2\f R2\rho\\
\ge&\int_{\Omega_R}|\overline{\nabla}^2f|^2\rho+(\frac{1}{2}-2\la_1)\int_{\Om_R}|\overline{\nabla}f|^2\rho-\int_{M}h(\nabla u,\nabla u)\rho-\f92\max_{X\in M}|u(X)|^2R^3\int_{\p B_R}\r.
\endaligned
\end{equation}
We may assume $\int_{M}h(\nabla u,\nabla u)\rho\le0$, or else we consider the bounded domain $U$ with $\p U=M$ instead of $\Om_R$. By trace theorems in Sobolev spaces (see \cite{HT} for example), there is a positive constant $C$ depending only on $n,R_0$ and $M$ such that
$$\int_{\Omega_{R_0}}|\overline{\nabla}^2f|^2+\int_{\Omega_{R_0}}|\overline{\nabla}f|^2\ge C\int_M|\overline{\na} f|^2.$$ Then for $R>R_0$ one has
\begin{equation}\aligned\label{RR0}
\int_{\Omega_{R}}|\overline{\nabla}^2f|^2\r+\int_{\Omega_{R}}&|\overline{\nabla}f|^2\r
\ge\int_{\Omega_{R_0}}|\overline{\nabla}^2f|^2\r+\int_{\Omega_{R_0}}|\overline{\nabla}f|^2\r\\
&\ge e^{-\f{R_0^2}4}\left(\int_{\Omega_{R_0}}|\overline{\nabla}^2f|^2+\int_{\Omega_{R_0}}|\overline{\nabla}f|^2\right)\ge e^{-\f{R_0^2}4}C\int_M|\overline{\na} f|^2\\
&\ge e^{-\f{R_0^2}4}C\int_M|\na u|^2\r=e^{-\f{R_0^2}4}C\la_1>0.
\endaligned
\end{equation}
If $\la_1<\f14$, then let $R$ go to infinite in (\ref{4.10}), which deduces a contradiction by \eqref{RR0}. Hence we get $\la_1\ge\f14$. \qed

\begin{corollary}
Let $M^n$ be a compact embedded self-shrinker in $\R^{n+1}$, then for any $f\in C^1(M)$ there is a Poincar$\acute{e}$ inequality
$$\int_M (f-\overline{f})^2\rho\leq4\int_M |\nabla f|^2\rho,$$
where $\overline{f}=\f{\int_Mf\r}{\int_M\r}$.
\end{corollary}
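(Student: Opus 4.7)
The plan is to obtain this Poincar\'e inequality as a direct consequence of the variational characterization of $\la_1$ together with the lower bound $\la_1\ge\f14$ established in Theorems~\ref{eigen1} and~\ref{eigen2}. I may assume $\int_M|\na f|^2\r<\infty$, else the inequality is trivial. Theorem~\ref{Vol} and the Gaussian factor in $\r=e^{-|X|^2/4}$ yield $\int_M\r<\infty$, so $\overline{f}$ is well defined, and Cauchy--Schwarz gives $\overline{f}^2\int_M\r\le\int_M f^2\r$. Consequently, setting $g=f-\overline{f}$, I have $\int_M g\,\r=0$, $\int_M g^2\r<\infty$, and $\na g=\na f$.

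When $f$ is smooth, $g$ is (after $L^2(M,\r)$--normalization) an admissible test function in the definition of $\la_1$, so
$$\la_1\int_M g^2\r\le\int_M|\na g|^2\r=\int_M|\na f|^2\r,$$
and combining with $\la_1\ge\f14$ delivers the claim with constant $4$.

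For a general $f$ with $\int_M f^2\r+\int_M|\na f|^2\r<\infty$, I would approximate by $f_R=\e_R f$, where $\e_R$ is a radial cutoff with $\e_R\equiv 1$ on $B_R$, $\e_R\equiv 0$ outside $B_{2R}$, and $|\overline{\na}\e_R|\le C/R$, in the same spirit as the cutoffs used elsewhere in the paper. Then $f_R\to f$ in $L^2(M,\r)$, $\overline{f}_R\to\overline{f}$ by dominated convergence (again using $\int_M\r<\infty$), and the only nontrivial piece is the gradient cross term
$$\int_M f^2|\overline{\na}\e_R|^2\r\le\f{C^2}{R^2}\int_{D_{2R}\setminus D_R}f^2\r\longrightarrow 0,$$
so $\int_M|\na f_R|^2\r\to\int_M|\na f|^2\r$; passing to the limit in the smooth inequality applied to $f_R$ finishes the proof. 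The main obstacle is slight: there is essentially no content beyond the eigenvalue bound itself, and the only point requiring care is the approximation for non-smooth $f$, which the cutoff above handles in the same vein as the $L^2(M,\r)$--approximation used earlier in Section~3 to show that $\la_1$ is attained.
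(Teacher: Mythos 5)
Your argument is correct and is essentially the paper's own proof: subtract the mean, normalize $f-\overline{f}$ in $L^2(M,\rho)$ so it is admissible in the variational characterization of $\la_1$, and combine with the bound $\la_1\ge\f14$ from Theorems \ref{eigen1} and \ref{eigen2}. The extra cutoff approximation you add is harmless but not needed (the infimum defining $\la_1$ does not require compact support), so it does not constitute a different route.
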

\begin{proof} If $f$ is not a constant, let $g=(\int_Mf^2\r-\overline{f}^2\cdot\int_M\r)^{-1/2}(f-\overline{f})$, then $\int_Mg\r=0$ and $\int_Mg^2\r=1$. Since $\la_1$ is the first eigenvalue of self-adjoint operator $\mathcal{L}$, then combining Theorem \ref{firsteigen} we complete the proof.
\end{proof}

Now, let us recall a classical result of P. Yang and S.T. Yau
\cite{YY}.
\begin{theorem}\label{YY}
(Yang-Yau)
Let $(\Si_g^2,ds^2)$ be an orientable Riemann surface of genus $g$ with area $Area(\Si_g)$. Then we have
$$\la_1(\Si_g)\le\f{8\pi(1+g)}{Area(\Si_g)},$$
where $\la_1(\Si_g)$ is the first eigenvalue of $\Delta$ on $\Si_g$.
\end{theorem}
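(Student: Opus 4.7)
The plan is to adapt the classical Hersch-type argument that uses a holomorphic branched covering of $S^2$ together with the variational characterization of $\lambda_1$. Since $\Sigma_g$ is an orientable Riemann surface, a standard consequence of Riemann--Roch (the existence of a linear system of degree $d=[(g+3)/2]\le g+1$ with two linearly independent sections, see Farkas--Kra or the proof in Yang--Yau \cite{YY}) produces a non-constant meromorphic function, hence a conformal branched covering $\varphi:\Sigma_g\to S^2$ of some degree $d\le g+1$. Throughout we identify $S^2$ with the unit sphere in $\R^3$ with coordinate functions $y^1,y^2,y^3$ satisfying $\sum_i (y^i)^2\equiv 1$.

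First I would use Hersch's lemma (center-of-mass balancing). For each $c$ in the open unit ball of $\R^3$, let $T_c$ be the conformal automorphism of $S^2$ sending $c$ to the origin (via a M\"obius transformation in the stereographic model). The map $c\mapsto \frac{1}{\mathrm{Area}(\Sigma_g)}\int_{\Sigma_g}T_c\circ\varphi\,dA$ is a continuous map of the open unit ball to itself, which extends continuously to the closed ball sending the boundary to itself with degree one; by a standard topological argument there is some $c_0$ for which the integral vanishes. Setting $\psi=T_{c_0}\circ\varphi$, the three coordinate functions $\psi^i:=y^i\circ\psi$ satisfy
\begin{equation*}
\int_{\Sigma_g}\psi^i\,dA=0,\qquad i=1,2,3,
\end{equation*}
so each $\psi^i$ is a legitimate test function for the Rayleigh quotient defining $\lambda_1(\Sigma_g)$.

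Next I would combine the variational inequality with the conformality of $\psi$. The characterization of $\lambda_1$ gives
\begin{equation*}
\lambda_1(\Sigma_g)\int_{\Sigma_g}(\psi^i)^2\,dA\;\le\;\int_{\Sigma_g}|\nabla\psi^i|^2\,dA,
\end{equation*}
and summing in $i$ and using $\sum_i(\psi^i)^2\equiv 1$ yields
\begin{equation*}
\lambda_1(\Sigma_g)\,\mathrm{Area}(\Sigma_g)\;\le\;\int_{\Sigma_g}\sum_{i=1}^3|\nabla\psi^i|^2\,dA.
\end{equation*}
Because $\psi$ is conformal (as a composition of holomorphic maps), the integrand on the right equals twice the Jacobian of $\psi$ with respect to the round metric of $S^2$, so its integral equals $2\deg(\psi)\,\mathrm{Area}(S^2)=8\pi\,d$.

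Combining the two displays with the bound $d\le g+1$ gives
\begin{equation*}
\lambda_1(\Sigma_g)\,\mathrm{Area}(\Sigma_g)\;\le\;8\pi\,d\;\le\;8\pi(1+g),
\end{equation*}
which is the desired inequality. The main obstacle, and the only non-elementary input, is the existence of the holomorphic map $\varphi$ of degree at most $g+1$; the balancing argument and the conformal energy identity are then routine. I would cite \cite{YY} for the existence statement rather than reprove it.
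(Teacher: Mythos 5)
Your argument is correct: it is precisely the classical Hersch--Yang--Yau proof (conformal branched cover of $S^2$ of degree $d\le g+1$ from Riemann--Roch, M\"obius balancing of the center of mass, and the conformal energy identity $\int\sum_i|\nabla\psi^i|^2\,dA=8\pi d$). The paper itself gives no proof of this statement --- it quotes the theorem and cites Yang--Yau --- so your write-up simply reproduces the argument of the cited source, which is exactly what is needed here.
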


Let $N^3=(\R^3,(\r\de_{ij}))$ and $M^2$ be a compact embedded
self-shrinker in $\R^3$. Let
$\widetilde{g}=\widetilde{g}_{ij}d\th_id\th_j$, $\widetilde{\na}$
and $\widetilde{\De}$ be the metric, the Levi-Civita connection
and the Laplacian operator of $M$ induced from $N^3$, respectively. Denote
the self-shrinker $M$ with metric $\widetilde{g}$ by
$\widetilde{M}$. Let $g_{ij}d\th_id\th_j$ and $d\mu$ be the
metric and the volume element of $M$ induced from $\R^3$, then
$\widetilde{g}_{ij}=\r g_{ij}$. Denote the first eigenvalue of
$\widetilde{M}$ by $\la_1(\widetilde{M})$, then
\begin{equation}\aligned\label{4.11}
\la_1(\widetilde{M})=&\inf_{\int_{M}f\r=0}\f{\int_{M}|\widetilde{\na}f|^2\r d\mu}{\int_{M}f^2\r d\mu}=\inf_{\int_{M}f\r=0}\f{\int_{M}\widetilde{g}^{ij}f_if_j\r d\mu}{\int_{M}f^2\r d\mu}\\
\ge&\inf_{\int_{M}f\r=0}\f{\int_{M}g^{ij}f_if_j\r
d\mu}{\int_{M}f^2\r d\mu}=\la_1\ge\f14,
\endaligned
\end{equation}
where we have used Theorem \ref{firsteigen} in the last inequality in
(\ref{4.11}).

\begin{corollary}\label{grow}
Let $M$ be a compact embedded self-shrinker in $\R^3$ with genus $g$, then
$$\int_{M}\rho\le32\pi(1+g),$$
moreover, $$\int_{D_r}1d\mu\leq 32e^{\f14}\pi(1+g)r^2 \quad
\mathrm{for}\quad r\ge1.$$
\end{corollary}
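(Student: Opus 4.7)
The plan is to combine the Yang--Yau upper bound on the first Laplace eigenvalue of an orientable Riemann surface with the lower bound $\lambda_1 > 1/4$ just established for compact embedded self-shrinkers (Theorem \ref{eigen1}), and then translate the resulting bound on the weighted volume $\int_M \rho\, d\mu$ into an unweighted volume growth estimate via the already-proved inequality (\ref{1.3}).

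First, I would apply Theorem \ref{YY} to the conformally rescaled self-shrinker $\widetilde M = (M^2, \widetilde g)$ with $\widetilde g_{ij} = \rho\, g_{ij}$. The area of $\widetilde M$ is exactly $\int_M \rho\, d\mu$ (since in two dimensions $\sqrt{\det \widetilde g} = \rho \sqrt{\det g}$), and the genus is unchanged by the conformal change. Yang--Yau therefore gives
\begin{equation*}
\lambda_1(\widetilde M) \;\le\; \frac{8\pi(1+g)}{\int_M \rho\, d\mu}.
\end{equation*}
On the other hand, the chain of inequalities (\ref{4.11}) established just above the corollary shows $\lambda_1(\widetilde M) \ge \lambda_1 > \tfrac14$ (the strict inequality coming from Theorem \ref{eigen1}). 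Combining these two inequalities and solving for $\int_M \rho\, d\mu$ gives the first claim
\begin{equation*}
\int_M \rho\, d\mu \;<\; 32\pi(1+g).
\end{equation*}

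For the volume growth bound, I would recall the key estimate (\ref{1.3}) from the proof of Theorem \ref{Vol}, specialized to $n=2$: for every $r\ge 1$,
\begin{equation*}
\frac{e^{-1/4}}{r^{2}} \int_{D_r} 1\, d\mu \;\le\; \int_{D_r} e^{-|X|^2/4}\, d\mu \;\le\; \int_M \rho\, d\mu.
\end{equation*}
Multiplying by $e^{1/4} r^{2}$ and substituting the bound $\int_M \rho\, d\mu < 32\pi(1+g)$ from the first part yields
\begin{equation*}
\int_{D_r} 1\, d\mu \;\le\; 32 e^{1/4} \pi (1+g)\, r^{2},
\end{equation*}
which is the desired inequality.

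There is no real obstacle here beyond checking that the two ingredients mesh: the Yang--Yau theorem requires orientability of $\Sigma_g$, but a compact embedded hypersurface in $\R^3$ is automatically orientable, so this is free. The conformal invariance of the Dirichlet energy in dimension two is essential in (\ref{4.11}), which is why the argument is special to $\R^3$ (i.e.\ $n=2$), and why we cannot expect an analogous genus-based volume bound in higher dimensions by this method.
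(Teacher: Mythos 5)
Your proposal is correct and follows essentially the same route as the paper: combine the Yang--Yau bound for $\widetilde M=(M,\rho\,g)$ with the lower bound $\la_1(\widetilde M)\ge\la_1>\f14$ from \eqref{4.11} and Theorem \ref{eigen1} to get $\int_M\r<32\pi(1+g)$, then insert this into \eqref{1.3} to obtain the Euclidean area bound. Your added remarks on orientability and the two-dimensional conformal structure are consistent with what the paper uses implicitly.
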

\begin{proof}
Combining (\ref{4.11}) and Theorem \ref{YY}, we get
\begin{equation}\aligned\label{4.12}
\int_{M}\rho\le32\pi(1+g).
\endaligned
\end{equation}
Combining (\ref{1.3}) and (\ref{4.12})  for $r\ge1$ gives
\begin{equation}\aligned\label{4.15}
\f1{r^2}e^{-\f14}\int_{D_r}1d\mu\le\f1{r^2}\int_{D_r}e^{-\f{|X|^2}{4r^2}}d\mu\le\int_{D_r}e^{-\f{|X|^2}4}d\mu
\le32\pi(1+g),
\endaligned
\end{equation}
which yields
\begin{equation}\aligned\label{4.55}
\int_{D_r}1d\mu\leq 32e^{\f14}\pi(1+g)r^2, \quad
\mathrm{for}\quad r\ge1.
\endaligned
\end{equation}
\end{proof}

For a non-negative integer g and a constant $D>0$, let $S_{g,D}$
denote the space of all compact embedded self-shrinkers in $\R^3$
with genus at most $g$, and diameter at most $D$. Now, we are in
a position to prove a compactness theorem.

\noindent {\it \textbf{Proof of Theorem \ref{compactness2}}.}
For any compact surface $\Si$ in $\R^3$, using Gauss-Bonnet formula one has
\begin{equation}\aligned\label{4.20}
\int_\Si|B|^2=\int_\Si H^2-2\int_\Si K=\int_\Si H^2-4\pi\chi(\Si),
\endaligned
\end{equation}
where $\chi(\Si)$ is the Euler number of surface $\Si$. If
$\Si\in S_{g,D}$, then \eqref{mean} implies there is a $X\in\Si$ with $|X|=\sqrt{2n}$ and $\Si\subset B_{D+\sqrt{2n}}$.
Combining (\ref{laplace}) and \eqref{4.55} gives
\begin{equation}\aligned\label{4.56}
\int_\Si H^2=\int_{\Si}1d\mu\leq 32e^{\f14}\pi(1+g)(D+\sqrt{2n})^2.
\endaligned
\end{equation}
Then \eqref{4.20} and \eqref{4.56} implies
\begin{equation}\aligned\label{4.21}
\int_\Si|B|^2=\int_\Si H^2-4\pi(2-2g)\le32e^{\f14}\pi(1+g)(D+\sqrt{2n})^2+8\pi(g-1).
\endaligned
\end{equation}
According to Proposition 5.10 of \cite{CM3}(see also \cite{CS}
and \cite{CM2}), we complete the proof. \qed

\medskip

\bibliographystyle{amsplain}

\end{document}